\newtheorem{theorem}{Theorem}
\newtheorem{lemma}{Lemma}
\newtheorem{remark}{Remark}
\newcounter{ex}
\begin{document}

\title{A note on a diffeomorphism between two Banach spaces }
\author{Marek Galewski, Elzbieta Galewska \ and Ewa Schmeidel}
\maketitle

\begin{abstract}
\noindent We provide sufficient conditions for a mapping acting between two
Banach spaces to be a diffeomorphism.

\noindent \textbf{Math Subject Classifications}: 57R50, 58E05

\noindent \textbf{Key Words}: diffeomorphism; mountain pass lemma
\end{abstract}

\section{Introduction}

Given two Banach spaces $X$ and $B$, a continuously Fr\'{e}chet -
differentiable map $f:X\rightarrow B$ is called a diffeomorphism if it is a
bijection and its inverse $f^{-1}:B\rightarrow X$ is continuously Fr\'{e}%
chet - differentiable as well. A continuous linear mapping $\Lambda
:X\rightarrow B$, $\Lambda \in L\left( X,B\right) $, is a Fr\'{e}chet -
derivative of $f$ at $x\in X$ provided that for all $h\in X$ it holds that%
\begin{equation}
f\left( x+h\right) -f\left( x\right) =\Lambda h+o\left( \left\Vert
h\right\Vert \right)  \label{def-FRE}
\end{equation}%
and where $\lim_{\left\Vert h\right\Vert \rightarrow 0}\frac{\left\Vert
o\left( \left\Vert h\right\Vert \right) \right\Vert }{\left\Vert
h\right\Vert }=0$; $\Lambda $ is then typically denoted as $f^{\prime }(x)$
while its action on $h$ as $f^{\prime }(x)h$. Mapping $f$ is continuously Fr%
\'{e}chet - differentiable if $f^{\prime }:X\rightarrow L\left( X,B\right) $
is continuous in respective topologies. Obviously if a mapping $f$ is a
diffeomorphism, it is automatically a homeomorphism, while the vice versa is
not correct as seen by example of a function $f\left( x\right) =x^{3}$.
Recalling the Inverse Function Theorem a continuously Fr\'{e}chet -
differentiable mapping $f:X\rightarrow B$ such that for any $x\in X$ the
derivative is surjective, i.e. $f^{\prime }(x)X=H$ and invertible, i.e.
there exists a constant $\alpha _{x}>0$ such that%
\begin{equation*}
\left\Vert f^{\prime }(x)h\right\Vert \geq \alpha _{x}\left\Vert h\right\Vert
\end{equation*}%
defines a local diffeomorphism. This means that for each point $x$ in $X$,
there exists an open set $U$ containing $x$, such that $f(U)$ is open in $B$
and $\left. f\right\vert _{U}:U\rightarrow f(U)$ is a diffeomorphism. If $f$
is a diffeomorphism it obviously defines a local diffeomorphism. Thus the
main problem to be overcome is to make a local diffeomorphism a global one.
Or in other words: what assumptions should be imposed on the spaces involved
and the mapping $f$ to have global diffeomorphism from the local one. This
task can be investigated within the critical point theory, or more precisely
with mountain geometry.

Such research has apparently been started by Katriel \cite{katriel}. His
result can be summarized as follows, see also Theorem 5.4 from \cite{jabri}:

\begin{theorem}
\label{MainTheo copy(2)}Let $X,$ $B$ be finite dimensional Euclidean spaces.
Assume that $f:X\rightarrow B$ is a $C^{1}$-mapping such that\newline
(a1) $f^{\prime }(x)$ is invertible for any $x\in X$;\newline
(a2) $\left\Vert f\left( x\right) \right\Vert \rightarrow \infty $ as $%
\left\Vert x\right\Vert \rightarrow \infty $\newline
then $f$ is a diffeomorphism.
\end{theorem}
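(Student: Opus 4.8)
The plan is to combine the Inverse Function Theorem recalled above with a variational argument built on mountain geometry applied, for each fixed target, to the squared-distance functional. First I would note that (a1) together with the Inverse Function Theorem makes $f$ a local diffeomorphism; in particular $f$ is an open map and, locally, has a $C^{1}$ inverse. Hence it suffices to prove that $f$ is a bijection: a bijective local diffeomorphism automatically has a globally defined $C^{1}$ inverse (continuity of $f^{-1}$ follows from openness of $f$, and $C^{1}$-regularity is a local statement already settled), so $f$ is then a diffeomorphism.

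For surjectivity, fix $y\in B$ and consider $\varphi_{y}:X\rightarrow \bR$, $\varphi_{y}(x)=\tfrac12\|f(x)-y\|^{2}$. This is a $C^{1}$ functional with $\varphi_{y}^{\prime }(x)=\left[ f^{\prime }(x)\right] ^{*}\bigl(f(x)-y\bigr)$, and by (a2) it is coercive, hence bounded below and, $X$ being finite dimensional, it attains a global minimum at some $x_{0}$. Then $\varphi_{y}^{\prime }(x_{0})=0$; since $f^{\prime }(x_{0})$ is invertible so is its adjoint $\left[ f^{\prime }(x_{0})\right] ^{*}$, which forces $f(x_{0})=y$. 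Thus $f(X)=B$. Note this argument also shows that \emph{every} critical point of $\varphi_{y}$ is mapped by $f$ to $y$, i.e.\ $\varphi_{y}$ has no nonzero critical values.

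For injectivity, suppose toward a contradiction that $f(a)=f(b)=y$ with $a\neq b$. Both $a$ and $b$ are global minimizers of $\varphi_{y}$ at level $0$. Because $f$ is a local diffeomorphism near $a$, the point $a$ is an isolated preimage of $y$, so for all sufficiently small $r>0$ (and $r<\|a-b\|$) one has $\inf_{\|x-a\|=r}\varphi_{y}(x)>0=\max\{\varphi_{y}(a),\varphi_{y}(b)\}$, which is exactly the mountain-pass geometry separating $a$ from $b$. Moreover $\varphi_{y}$, being coercive and $C^{1}$ on a finite dimensional space, satisfies the Palais--Smale condition. The Mountain Pass Lemma then produces a critical point $c$ of $\varphi_{y}$ with $\varphi_{y}(c)\geq \inf_{\|x-a\|=r}\varphi_{y}(x)>0$, contradicting the fact established above that all critical points of $\varphi_{y}$ are zeros. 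Hence $f$ is injective, and together with the previous steps $f$ is the desired diffeomorphism.

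The routine parts are the differentiation of $\varphi_{y}$, the coercivity-implies-Palais--Smale observation in finite dimensions, and the passage from bijective local diffeomorphism to diffeomorphism. The main obstacle is the injectivity step: one must verify carefully that the two candidate minima are genuinely separated by a "ridge" on which $\varphi_{y}$ stays bounded away from $0$ (this is where isolatedness of the minimizers, coming from (a1), is essential), check the hypotheses of the Mountain Pass Lemma, and then use the rigidity that (a1) imposes — namely the absence of nonzero critical values of $\varphi_{y}$ — to close the contradiction.
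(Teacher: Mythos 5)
Your argument is correct. Note, however, that the paper does not actually prove this statement: it is Katriel's theorem, quoted from the literature, and the paper's own proof technique appears only for its main result (Theorem \ref{MainTheo}), of which this is the finite-dimensional prototype. Your proof follows essentially that same strategy --- local diffeomorphism from (a1), surjectivity by minimizing $\varphi _{y}\left( x\right) =\frac{1}{2}\left\Vert f\left( x\right) -y\right\Vert ^{2}$ and using invertibility of the derivative to kill the gradient, injectivity by contradiction via mountain pass geometry between two preimages --- with two small deviations worth noting. First, you obtain $\inf_{\left\Vert x-a\right\Vert =r}\varphi _{y}>0$ from isolatedness of the preimage plus compactness of the sphere, which is fine in finite dimensions but is exactly the step that fails in Banach spaces; the paper instead derives a quantitative lower bound $\left\Vert g\left( x\right) \right\Vert \geq \frac{1}{2}\alpha _{x_{2}}\left\Vert x\right\Vert $ near $0$ from the Fr\'{e}chet expansion and the bound $\left\Vert f^{\prime }(x)h\right\Vert \geq \alpha _{x}\left\Vert h\right\Vert $, which is what makes the argument dimension-free. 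Second, you close the contradiction through the critical \emph{level}: the classical mountain pass theorem gives a critical value at least $\inf_{\left\Vert x-a\right\Vert =r}\varphi _{y}>0$, while all critical points of $\varphi _{y}$ sit at level $0$; the paper instead uses the weak Figueredo--Solimini version (Lemma \ref{MPT}), which carries no level information, together with Remark \ref{remafertMPT} to get a critical point $v\notin \left\{ 0,e\right\} $, and then the fact that every critical point is a preimage forces $v\in \left\{ 0,e\right\} $. Both mechanisms are valid; yours is slightly more economical in the finite-dimensional setting, the paper's is the one that survives the generalization it is after.
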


Recently, Idczak, Skowron and Walczak \cite{SIW} using the Mountain Pass
Lemma and ideas contained in the proof of Theorem \ref{MainTheo copy(2)}
(see \cite{jabri} for some nice version) proved the result concerning
diffeomorphism between a Banach and a Hilbert space. They further applied
this abstract tool to the initial value problem for some
integro-differential system in order to get differentiability of the
solution operator. It seems that differentiable dependence on parameters for
boundary value problems can be investigated by this method. The result from 
\cite{SIW} reads\textbf{:}

\begin{theorem}
\label{TheoSW}Let $X$ be a real Banach space, $H$ - a real Hilbert space. If 
$f:X\rightarrow H$ is a $C^{1}$-mapping such that\newline
(b1) for any $y\in H$ the functional $\varphi :X\rightarrow 
\mathbb{R}
$ given by the formula 
\begin{equation*}
\varphi \left( x\right) =\frac{1}{2}\left\Vert f\left( x\right)
-y\right\Vert ^{2}
\end{equation*}%
satisfies Palais-Smale condition;\newline
(b2) for any $x\in X$, $f^{\prime }(x)X=H$ and there exists a constant $%
\alpha _{x}>0$ such that%
\begin{equation}
\left\Vert f^{\prime }(x)h\right\Vert \geq \alpha _{x}\left\Vert h\right\Vert
\label{alfa_x}
\end{equation}%
\newline
then $f$ is a diffeomorphism.
\end{theorem}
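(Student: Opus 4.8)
The plan is to follow the scheme going back to Katriel and refined in \cite{SIW}: upgrade assumption (b2) to the statement that $f$ is a local diffeomorphism, establish separately that $f$ is onto and one-to-one, and then observe that a bijective local diffeomorphism is automatically a global one. First I would check the local statement: for fixed $x\in X$ the lower bound in (b2) makes $f^{\prime }(x)\in L(X,H)$ injective with closed range, while its surjectivity (also part of (b2)) makes it onto, so $f^{\prime }(x)$ is a continuous linear bijection, hence $f^{\prime }(x)^{-1}\in L(H,X)$ by the open mapping theorem, and the Inverse Function Theorem in Banach spaces applies at every point. It then suffices to prove that $f$ is a bijection: once this is known, the global inverse $f^{-1}$ coincides near each point of $H$ with a locally defined $C^{1}$ inverse and is therefore $C^{1}$. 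The critical point theory enters only in the remaining two steps, through Ekeland's variational principle and the Mountain Pass Lemma applied to the functionals $\varphi$ from (b1).

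For surjectivity, I would fix $y\in H$ and work with $\varphi(x)=\tfrac12\|f(x)-y\|^{2}$, which is $C^{1}$ with $\varphi^{\prime }(x)h=\langle f(x)-y,\,f^{\prime }(x)h\rangle$. Since $\varphi$ is continuous, nonnegative, hence bounded below on the complete space $X$, Ekeland's variational principle yields a Palais--Smale sequence at the level $\inf_{X}\varphi$; by (b1) a subsequence converges to some $x_{0}$, and continuity of $\varphi^{\prime }$ gives $\varphi^{\prime }(x_{0})=0$, i.e.\ $\langle f(x_{0})-y,\,f^{\prime }(x_{0})h\rangle=0$ for all $h\in X$. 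Since $f^{\prime }(x_{0})X=H$, this forces $f(x_{0})=y$; as $y$ was arbitrary, $f$ is onto.

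For injectivity I would argue by contradiction: suppose $f(x_{1})=f(x_{2})=:y$ with $x_{1}\neq x_{2}$, and set $\varphi(x)=\tfrac12\|f(x)-y\|^{2}$, so that $x_{1},x_{2}$ are global minimizers of $\varphi$ with value $0$. Using the local diffeomorphism at $x_{1}$, I would pick $r\in(0,\|x_{1}-x_{2}\|)$ so small that $f$ is invertible with continuous inverse on a neighbourhood of $\overline{B}(x_{1},r)$; then $m_{r}:=\inf\{\varphi(x):\|x-x_{1}\|=r\}>0$, for otherwise a sequence on that sphere along which $\varphi\to0$ would be carried by $f$ to a sequence tending to $f(x_{1})=y$, hence by continuity of $f^{-1}$ back to $x_{1}$, contradicting $\|x-x_{1}\|=r$. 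Every continuous path from $x_{1}$ to $x_{2}$ crosses that sphere, so the mountain pass level
\[
c=\inf_{\gamma}\ \max_{t\in[0,1]}\varphi(\gamma(t)),\qquad\gamma\in C([0,1],X),\ \gamma(0)=x_{1},\ \gamma(1)=x_{2},
\]
satisfies $c\geq m_{r}>0=\max\{\varphi(x_{1}),\varphi(x_{2})\}$; by (b1) the Mountain Pass Lemma then produces a critical point $x_{3}$ with $\varphi(x_{3})=c>0$. But $\varphi^{\prime }(x_{3})=0$ together with $f^{\prime }(x_{3})X=H$ forces $f(x_{3})=y$, hence $\varphi(x_{3})=0$, a contradiction. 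Combined with the preceding steps, this proves the theorem.

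The step I expect to be the main obstacle is the strict positivity $m_{r}>0$ of $\varphi$ on a small sphere around $x_{1}$. In finite dimensions it is immediate since spheres are compact, but in an infinite-dimensional Banach space a strict local minimum of $\varphi$ need not be quantitatively separated from its value on nearby spheres, so this must be extracted from the local invertibility of $f$ (the first step) rather than from minimality of $\varphi$ alone. Everything else --- producing Palais--Smale sequences via Ekeland, invoking the Mountain Pass Lemma, and using surjectivity of $f^{\prime }(x)$ to turn a critical point of $\varphi$ into a solution of $f(x)=y$ --- is routine.
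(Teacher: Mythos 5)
Your proposal is correct, and its overall architecture --- local diffeomorphism from (b2) via the open mapping and inverse function theorems, surjectivity from Ekeland's principle plus (b1), injectivity by a mountain-pass contradiction --- is the same as the paper's, which proves the more general Theorem \ref{MainTheo} (with $\tfrac12\left\Vert \cdot\right\Vert ^{2}$ replaced by a functional $\eta$) and obtains Theorem \ref{TheoSW} as the special case $\eta\left( x\right) =\tfrac12\left\Vert x\right\Vert ^{2}$. Where you genuinely deviate is inside the injectivity step, in two places. First, the mountain-pass geometry: the paper gets the lower bound on a small sphere quantitatively, setting $g\left( x\right) =f\left( x+x_{2}\right) -a$ and combining the definition (\ref{def-FRE}) of the Fr\'{e}chet derivative with the bound (\ref{alfa_x}) to obtain $\left\Vert g\left( x\right) \right\Vert \geq \tfrac12\alpha _{x_{2}}\left\Vert x\right\Vert$ near $0$, hence $\psi \geq c\left( \tfrac12\alpha _{x_{2}}\right) ^{\alpha }\left\Vert x\right\Vert ^{\alpha }$ on the sphere; you instead prove $m_{r}>0$ qualitatively from continuity of the local inverse furnished by the inverse function theorem. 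Both are valid; the quantitative estimate is what the paper needs anyway to exploit its condition (c4) for general $\eta$, while your argument is shorter in the Hilbert case and correctly identifies this as the point where infinite-dimensionality must be compensated by local invertibility rather than compactness. Second, the contradiction: the paper uses the weak Figueredo--Solimini version (Lemma \ref{MPT} together with Remark \ref{remafertMPT}) to produce a critical point $v\neq 0$, $v\neq e$ and then argues through $\eta ^{\prime }=0$ and (c1), i.e.\ through the location of the critical point; you invoke the classical minimax (Ambrosetti--Rabinowitz) form of the mountain pass theorem, whose conclusion controls the critical value $c\geq m_{r}>0$, and derive the contradiction from the value alone, since surjectivity of $f^{\prime }(x_{3})$ forces $f\left( x_{3}\right) =y$ and hence $\varphi \left( x_{3}\right) =0$. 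This value-based contradiction is self-contained and arguably cleaner than the paper's final step; the only caveat is that you are using a stronger version of the MPL than the one stated in the paper, which is legitimate here because your geometry is strict ($m_{r}>0=\max \{\varphi \left( x_{1}\right) ,\varphi \left( x_{2}\right) \}$) and (b1) supplies the Palais--Smale condition.
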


The question aroused \textit{whether the Hilbert space }$H$\textit{\ in the
formulation of the above theorem could be replaced by a Banach space.} This
question is of some importance since one would expect diffeomorphism to act
between two Hilbert spaces or two Banach spaces rather than between a
Hilbert and a Banach space. The applications given in \cite{SIW} work when
both $X$ and $H$ are Hilbert spaces.

The aim of this note is to provide an affirmative answer to this question.
We also simplify a bit the proof of Theorem \ref{TheoSW} by using a weak
version of the MPL Lemma due to Figueredo and Solimini, see \cite{fig1}, 
\cite{fig2} which we recall below.

Functional $J:X\rightarrow \mathbb{R}$ satisfies the Palais-Smale condition
if every sequence $(u_{n})$ such that $\{J(u_{n})\}$ is bounded and $%
J^{\prime }(u_{n})\rightarrow 0$, has a convergent subsequence. We note that
in a finite dimensional setting condition (\textit{a2}) implies that the
Palais-Smale condition holds for $x\rightarrow \left\Vert f\left( x\right)
\right\Vert $. The version of the Mountain Pass Lemma (MPL Lemma) which we
use is as follows.

\begin{lemma}
\cite{fig1}\label{MPT}(\textbf{Mountain Pass Lemma}) Let $X$ be a Banach
space and $J\in C^{1}(X,\mathbb{R})$ satisfies the Palais-Smale condition.
Assume that 
\begin{equation}
\inf_{\left\Vert x\right\Vert =r}J(x)\geq \max \{J(0),J(e)\}\text{,}
\label{condMPT}
\end{equation}%
where $0<r<\left\Vert e\right\Vert $ and $e\in X$. Then $J$ has a non-zero
critical point $x_{0}$.
\end{lemma}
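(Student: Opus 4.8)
The plan is to attach the standard min-max scheme to the mountain-pass geometry and, because the inequality in \eqref{condMPT} is only weak, to treat the degenerate case separately. First I would set $\Gamma=\{\gamma\in C([0,1],X):\gamma(0)=0,\ \gamma(1)=e\}$ and $c=\inf_{\gamma\in\Gamma}\max_{t\in[0,1]}J(\gamma(t))$; the straight segment $t\mapsto te$ shows $c<\infty$. Since $0<r<\|e\|$, for every $\gamma\in\Gamma$ the continuous function $t\mapsto\|\gamma(t)\|$ takes both values $0$ and $\|e\|$, hence also the value $r$, so $\max_{t}J(\gamma(t))\geq\inf_{\|x\|=r}J(x)$, and together with \eqref{condMPT} this gives
\begin{equation}
c\ \geq\ \inf_{\|x\|=r}J(x)\ \geq\ \max\{J(0),J(e)\}=:m . \label{eq:cbound}
\end{equation}
The argument then splits according to whether $c>m$ or $c=m$.

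If $c>m$, I would argue as in the classical Mountain Pass Theorem. Suppose, for contradiction, that $J$ has no critical point at level $c$. Then the Palais--Smale condition provides $\varepsilon,\delta>0$, with $\varepsilon<(c-m)/2$, such that $\|J'(x)\|\geq\delta$ whenever $|J(x)-c|\leq2\varepsilon$, and the deformation lemma (via a pseudo-gradient flow) gives a continuous $\sigma:X\to X$ with $J(\sigma(x))\leq J(x)$ for all $x$, $\sigma=\mathrm{id}$ on $\{J\leq c-2\varepsilon\}$, and $\sigma(\{J\leq c+\varepsilon\})\subseteq\{J\leq c-\varepsilon\}$. Picking $\gamma\in\Gamma$ with $\max_{t}J(\gamma(t))<c+\varepsilon$ and noting $J(0),J(e)\leq m<c-2\varepsilon$, so that $\sigma$ fixes the endpoints of $\gamma$, the path $\sigma\circ\gamma$ again lies in $\Gamma$ and satisfies $\max_{t}J(\sigma(\gamma(t)))\leq c-\varepsilon$, contradicting the definition of $c$. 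Hence $J$ has a critical point $x_{0}$ with $J(x_{0})=c$, and $x_{0}\neq0$ because $J(0)\leq m<c=J(x_{0})$.

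If $c=m$, then \eqref{eq:cbound} forces $\inf_{\|x\|=r}J=c$, so the sphere $S=\{x:\|x\|=r\}$ is a ``tight'' linking set: every $\gamma\in\Gamma$ meets $S$ and $\inf_{S}J=c$. Here the plain path min-max is no longer decisive, since the critical point it detects sits at level $m$, which may equal $J(0)$, so it could be the origin. The right target is a critical point lying \emph{on} $S$ --- automatically nonzero because $r>0$. To obtain it, I would apply Ekeland's variational principle to the functional $\Psi(\gamma)=\max_{t\in[0,1]}J(\gamma(t))$ on the complete metric space $\Gamma$ equipped with the uniform metric: this produces near-optimal paths with controlled slope, from which one extracts a Palais--Smale sequence $(x_{n})$ for $J$ that, crucially, concentrates near $S$, i.e.\ $J(x_{n})\to c$, $J'(x_{n})\to0$ and $\mathrm{dist}(x_{n},S)\to0$. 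The Palais--Smale condition then yields a convergent subsequence whose limit $x_{0}$ belongs to $\overline{S}=S$, satisfies $J(x_{0})=c$ and $J'(x_{0})=0$, and is nonzero. (Equivalently one can run the original argument of de Figueiredo and Solimini, using a pseudo-gradient flow cut off to a thin annulus about $S$ --- which fixes $0$ and $e$ once the annulus has half-width less than $\min\{r,\|e\|-r\}$ --- to reach a contradiction with $\inf_{S}J=c$ under the assumption that no such critical point exists.) The main obstacle, in either approach, is exactly the degenerate case $c=m$: one must force the almost-critical sequence to stay near $S$, i.e.\ genuinely use the weak inequality in \eqref{condMPT}, and this is precisely the point on which this lemma improves the classical Mountain Pass Theorem.
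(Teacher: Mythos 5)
The paper does not actually prove this lemma --- it is quoted from de Figueiredo's lecture notes \cite{fig1} (the de Figueiredo--Solimini refinement of the Mountain Pass Theorem allowing the non-strict inequality in \eqref{condMPT}) --- so the relevant comparison is with that standard proof, and your outline reproduces it faithfully: define the min-max level $c$ over paths joining $0$ to $e$, note $c\geq\inf_{\left\Vert x\right\Vert =r}J\geq\max\{J(0),J(e)\}$ because every path crosses the sphere $S=\{\left\Vert x\right\Vert =r\}$, run the classical deformation argument when $c>\max\{J(0),J(e)\}$ (which also explains Remark \ref{remafertMPT}, since then $J(x_{0})=c$ excludes both $0$ and $e$), and in the degenerate case $c=\max\{J(0),J(e)\}$ look for a critical point \emph{on} $S$, which is automatically non-zero. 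The one place where your text asserts rather than proves is precisely the heart of the degenerate case: that Ekeland's principle applied to $\gamma\mapsto\max_{t}J(\gamma(t))$ yields a Palais--Smale sequence with $\mathrm{dist}(x_{n},S)\rightarrow 0$ is not automatic and requires the Ghoussoub--Preiss-type refinement (or, equivalently, the original de Figueiredo--Solimini device you mention in parentheses, a pseudo-gradient deformation cut off to an annulus around $S$ of half-width less than $\min\{r,\left\Vert e\right\Vert -r\}$, so that endpoints are fixed and a deformed path would have to cross $S$ at a point where $J<c$, contradicting $\inf_{S}J=c$). Since you name both mechanisms correctly and the rest of the argument is complete, I regard this as the same proof as in the cited source, with the understanding that the concentration-near-$S$ step would have to be written out in full in a final version, as it is the entire content by which this lemma goes beyond the classical Mountain Pass Theorem.
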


\begin{remark}
\label{remafertMPT}From the proof of Lemma \ref{MPT}\ it is seen that if $%
\inf_{\left\Vert x\right\Vert =r}J(x) > \max \{J(0),J(e)\}$, then also $%
x\neq e $.
\end{remark}

\section{Main result}

Our main result concerns extension of Theorem \ref{TheoSW}\ to the case of $%
H $ being a Banach space. We retain the assumption providing local
diffeomorphism and modify assumption (\textit{b1}) to get the global
diffeomorphism. This is realized by replacing $\left\Vert \cdot \right\Vert
^{2}$ with some functional $\eta $ for which functional $x\rightarrow \eta
\left( f\left( x\right) -y\right) $ satisfies the Palais-Smale condition for
all $y.$ One can think of $\eta $ as $\eta \left( x\right)
=\int_{0}^{1}\left\vert x\left( t\right) \right\vert ^{p}dt$ for $x\in
L^{p}\left( 0,1\right) $, $p>1$. Our main result reads\textbf{:}

\begin{theorem}
\label{MainTheo}Let $X,$ $B$ be real Banach spaces. Assume that $%
f:X\rightarrow B$ is a $C^{1}$-mapping, $\eta :B\rightarrow 
\mathbb{R}
_{+}$ is a $C^{1}$ functional and that the following conditions hold\newline
(c1) $\left( \eta \left( x\right) =0\Longleftrightarrow x=0\right) $ and $%
\left( \eta ^{^{\prime }}\left( x\right) =0\Longleftrightarrow x=0\right) $; 
\newline
(c2) for any $y\in B$ the functional $\varphi :X\rightarrow 
\mathbb{R}
$ given by the formula 
\begin{equation*}
\varphi \left( x\right) =\eta \left( f\left( x\right) -y\right)
\end{equation*}%
satisfies Palais-Smale condition;\newline
(c3) for any $x\in X$ the Fr\'{e}chet derivative is surjective, i.e. $%
f^{\prime }(x)X=B$, and there exists a constant $\alpha _{x}>0$ such that
for all $h\in X$%
\begin{equation*}
\left\Vert f^{\prime }(x)h\right\Vert \geq \alpha _{x}\left\Vert
h\right\Vert \text{;}
\end{equation*}%
\newline
(c4) there exist positive constants $\alpha $, $c$, $M$ such that 
\begin{equation*}
\eta \left( x\right) \geq c\left\Vert x\right\Vert ^{\alpha }\text{ for }%
\left\Vert x\right\Vert \leq M
\end{equation*}%
\newline
then $f$ is a diffeomorphism.
\end{theorem}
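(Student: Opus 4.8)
The plan is to establish the three ingredients of a global diffeomorphism one at a time: that (c3) by itself makes $f$ a local $C^{1}$-diffeomorphism, that (c1)--(c3) make $f$ surjective, and that (c1), (c3) and (c4) together with Lemma \ref{MPT} make $f$ injective. A bijective local $C^{1}$-diffeomorphism is automatically a diffeomorphism (its local $C^{1}$ inverses patch together to a globally defined $C^{1}$ inverse), so these three facts finish the proof. The computational backbone throughout is the chain rule: fixing $y\in B$ and writing $\varphi(x):=\eta\bigl(f(x)-y\bigr)$, one has $\varphi'(x)=\eta'\bigl(f(x)-y\bigr)\circ f'(x)$, and since $f'(x)X=B$ by (c3), the identity $\varphi'(x)=0$ forces $\eta'\bigl(f(x)-y\bigr)=0$ on $B$, hence $f(x)=y$ by (c1). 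Thus every critical point of $\varphi$ lies in $f^{-1}(y)$, where $\varphi$ equals $\eta(0)=0$.

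For the local diffeomorphism I would simply note that (c3) makes each $f'(x)\in L(X,B)$ a topological isomorphism (injective, with $\|f'(x)^{-1}\|\le 1/\alpha_{x}$ on its range, and onto $B$), so the Inverse Function Theorem recalled in the Introduction applies. For surjectivity, fix $y$; the functional $\varphi(x)=\eta\bigl(f(x)-y\bigr)$ is $C^{1}$, nonnegative and satisfies Palais--Smale by (c2), hence it attains its infimum at some $x_{*}$ — a minimizing sequence can be taken to be a Palais--Smale sequence, e.g.\ by Ekeland's variational principle, and (c2) then extracts a convergent subsequence. Since $\varphi'(x_{*})=0$, the backbone observation gives $f(x_{*})=y$, so $f(X)=B$.

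The heart of the matter is injectivity. Suppose $f(x_{1})=f(x_{2})=y$ with $x_{1}\ne x_{2}$ and set $\varphi(x)=\eta\bigl(f(x)-y\bigr)\ge 0$, so $\varphi(x_{1})=\varphi(x_{2})=0$; I want to produce a critical point of $\varphi$ at a positive level, contradicting the backbone observation. First I would check the mountain-pass geometry near $x_{1}$: from $f(x)-f(x_{1})=f'(x_{1})(x-x_{1})+o(\|x-x_{1}\|)$ and the lower bound in (c3) one gets $\|f(x)-f(x_{1})\|\ge\tfrac{1}{2}\alpha_{x_{1}}\|x-x_{1}\|$ for $\|x-x_{1}\|$ small; there $\|f(x)-f(x_{1})\|\le M$ as well, so (c4) yields $\varphi(x)\ge c\,(\alpha_{x_{1}}/2)^{\alpha}\|x-x_{1}\|^{\alpha}$ on a small ball around $x_{1}$. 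Hence I can choose $r$ with $0<r<\|x_{2}-x_{1}\|$ and $\inf_{\|x-x_{1}\|=r}\varphi(x)>0=\max\{\varphi(x_{1}),\varphi(x_{2})\}$. Applying Lemma \ref{MPT} to $J(u):=\varphi(x_{1}+u)$ — which is $C^{1}$ and satisfies Palais--Smale by (c2) — with $e:=x_{2}-x_{1}$ gives a non-zero critical point $u_{0}$ of $J$ whose critical value, as the proof of that lemma shows, is at least $\inf_{\|u\|=r}J(u)>0$. But any critical point $u$ of $J$ has $\varphi'(x_{1}+u)=0$, so $x_{1}+u\in f^{-1}(y)$ and $J(u)=\eta(0)=0$, a contradiction. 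Hence $f$ is injective, and being a local $C^{1}$-diffeomorphism as well, it is a diffeomorphism.

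The main obstacle is the mountain-pass inequality in the last step: one must convert the infinitesimal invertibility of $f'(x_{1})$ into a genuine positive lower bound for $\varphi$ on a small sphere about $x_{1}$, and this is exactly where (c4) — used nowhere else — is indispensable, for without a quantitative comparison between $\eta$ and the norm near the origin the sphere could drop to level $0$. A secondary subtlety is that the critical point delivered by Lemma \ref{MPT} must be placed at the positive mountain-pass level to clash with the fact that every critical point of $\varphi$ is a zero of $\varphi$; as in Remark \ref{remafertMPT}, this relies on what the proof of the lemma yields rather than only on its printed statement.
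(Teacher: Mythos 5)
Your proposal is correct and follows the same overall strategy as the paper: condition (c3) plus the Inverse Function Theorem for the local diffeomorphism, Ekeland's principle together with (c1)--(c3) for surjectivity, and the weak Mountain Pass Lemma, with the mountain geometry built from the differentiability estimate and (c4), for injectivity. The one genuine difference is the endgame of the injectivity argument. The paper translates by $x_{2}$, obtains via Lemma \ref{MPT} and Remark \ref{remafertMPT} a critical point $v\notin\{0,e\}$ of $\psi(x)=\eta\left(f(x+x_{2})-a\right)$, deduces $f(v+x_{2})=a$, and concludes that this forces $v\in\{0,e\}$ --- a step that tacitly assumes $f^{-1}(a)=\{x_{1},x_{2}\}$, although only the existence of two distinct preimages was postulated. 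You instead close through the critical \emph{level}: every critical point of $\varphi$ lies in $f^{-1}(y)$ and hence sits at level $\eta(0)=0$, while the mountain-pass critical point sits at level at least $\inf_{\left\Vert u\right\Vert =r}J>0$. This is cleaner and is insensitive to how many preimages $a$ has; its only cost is that, exactly as with the paper's Remark \ref{remafertMPT}, the lower bound on the critical value is read off from the proof of the Figueiredo--Solimini lemma rather than from its printed statement (and it does hold there: in both cases of that proof the critical value is at least $\inf_{\left\Vert x\right\Vert =r}J$), a reliance you acknowledge explicitly. A further small improvement on your side is the use of $M$ in (c4): you correctly ensure $\left\Vert f(x)-f(x_{1})\right\Vert \leq M$ near $x_{1}$ by continuity, whereas the paper's choice $\rho <\min \left\{ \left\Vert e\right\Vert ,M\right\}$ bounds the radius in $X$ rather than the quantity $\left\Vert g(x)\right\Vert$ to which (c4) is applied.
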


\begin{proof}
We follow the ideas used in the proof of Main Theorem in \cite{SIW} with
necessary modifications. In view of the remarks made in the Introduction
condition (\textit{c3}) implies that $f$ is a local diffeomorphism. Thus it
is sufficient to show that $f$ is onto and one to one.\bigskip 

Firstly we show that $f$ is onto.\textbf{\ }Let us fix any point $y\in B$.
Observe that $\varphi $ is a composition of two $C^{1}$ mappings, thus $%
\varphi \in C^{1}\left( X,%
\mathbb{R}
\right) $. Moreover, $\varphi $ is bounded from below and satisfies the
Palais-Smale condition. Thus from the Ekeland's Variational Principle it
follows that there exists argument of a minimum which we denote by $%
\overline{x}$, see Theorem 4.7 \cite{fig1}. We see by the chain rule for Fr%
\'{e}chet derivatives and by Fermat's Principle that 
\begin{equation*}
\varphi ^{^{\prime }}(\overline{x})=\eta ^{^{\prime }}(f\left( \overline{x}%
\right) -y)\circ f^{\prime }(\overline{x})=0\text{.}
\end{equation*}%
Since by (\textit{c3)} mapping $f^{\prime }(\overline{x})$ is invertible we
see that $\eta ^{^{\prime }}(f\left( \overline{x}\right) -y)=0$. Now by (c1)
it follows that%
\begin{equation*}
f\left( \overline{x}\right) -y=0\text{.}
\end{equation*}%
Thus $f$ is surjective.\bigskip 

Now we argue by contradiction that $f$ is one to one.\textbf{\ }Suppose
there are $x_{1}$ and $x_{2}$, $x_{1}\neq x_{2}$, $x_{1}$, $x_{2}\in X$,
such that $f\left( x_{1}\right) =f\left( x_{2}\right) =a\in B$. We will
apply Lemma \ref{MPT}. Thus we put $e=x_{1}-x_{2}$ and define mapping $%
g:X\rightarrow B$ by the following formula%
\begin{equation*}
g\left( x\right) =f\left( x+x_{2}\right) -a\text{.}
\end{equation*}%
Observe that $g\left( 0\right) =g\left( e\right) =0$. We define functional $%
\psi :X\rightarrow 
\mathbb{R}
$ by the following formula%
\begin{equation*}
\psi \left( x\right) =\eta \left( g\left( x\right) \right) .
\end{equation*}%
By (\textit{c2}) functional $\psi $ satisfies the Palais-Smale condition.
Next we see that $\psi \left( e\right) =\psi \left( 0\right) =0.$ Using (\ref%
{def-FRE}) and (\ref{alfa_x}) we see that there is a number $\rho >0$ such
that%
\begin{equation}
\frac{1}{2}\alpha _{\overline{x}}\left\Vert x\right\Vert \leq \left\Vert
g\left( x\right) \right\Vert \text{ for }x\in \overline{B\left( 0,\rho
\right) }\text{.}  \label{ro}
\end{equation}%
Indeed, since $\lim_{\left\Vert h\right\Vert \rightarrow 0}\frac{o\left(
\left\Vert h\right\Vert \right) }{\left\Vert h\right\Vert }=0$ we see that
for $\left\Vert h\right\Vert $ sufficiently small$,$say $\left\Vert
h\right\Vert \leq \delta $, it holds that $o\left( \left\Vert h\right\Vert
\right) \leq \frac{1}{2}\alpha _{x_{2}}\left\Vert h\right\Vert $ and 
\begin{equation*}
g\left( 0+h\right) -g\left( 0\right) =g^{^{\prime }}(0)h+o\left( \left\Vert
h\right\Vert \right) .
\end{equation*}%
By definition of $g$ and by (\textit{c3}) we see for $\left\Vert
h\right\Vert \leq \delta $ that%
\begin{equation*}
\left\Vert g\left( h\right) \right\Vert +\frac{1}{2}\alpha
_{x_{2}}\left\Vert h\right\Vert \geq \left\Vert g\left( h\right) -o\left(
\left\Vert h\right\Vert \right) \right\Vert =\left\Vert f^{^{\prime
}}(x_{2})h\right\Vert \geq \alpha _{x_{2}}\left\Vert h\right\Vert .
\end{equation*}%
We can always assume that $\delta <\rho <\min \left\{ \left\Vert
e\right\Vert ,M\right\} $. Thus (\ref{ro}) holds. Take any $0<r<\rho $.
Recall that by (\textit{c4}) we obtain since (\ref{ro}) holds 
\begin{equation*}
\psi (x)=\eta \left( g\left( x\right) \right) \geq c\left\Vert g\left(
x\right) \right\Vert ^{\alpha }\geq c\left( \frac{1}{2}\alpha
_{x_{2}}\right) ^{\alpha }\left\Vert x\right\Vert ^{\alpha }.
\end{equation*}%
Thus 
\begin{equation*}
\inf_{\left\Vert x\right\Vert =r}\psi (x)\geq c\left( \frac{1}{2}\alpha
_{x_{2}}\right) ^{\alpha }\left\Vert r\right\Vert ^{\alpha }>0=\psi \left(
e\right) =\psi \left( 0\right) 
\end{equation*}%
We see that \textbf{(}\ref{condMPT}\textbf{)} is satisfied for $J=\psi $.
Thus by Lemma \ref{MPT} and by Remark \ref{remafertMPT} we note that $\psi $
has a critical point $v\neq 0$, $v\neq e$ and such that 
\begin{equation*}
\psi ^{^{\prime }}(v)=\eta ^{^{\prime }}(f\left( v+x_{2}\right) -a)\circ
f^{\prime }(v+x_{2})=0.
\end{equation*}%
Since $f^{\prime }(v+x_{2})$ is invertible, we see that $\eta ^{^{\prime
}}(f\left( v+x_{2}\right) -a)=0$. So by the assumption (c1) we calculate $%
f\left( v+x_{2}\right) -a=0$. This means that either $v=0$ or $v=e$. Thus we
obtain a contradiction which shows that $f$ is a one to one operator.
\end{proof}

We supply our result with a few of remarks.

\begin{remark}
We see that from Theorem \ref{MainTheo} by putting $\eta \left( x\right) =%
\frac{1}{2}\left\Vert x\right\Vert ^{2}$ we obtain easily Theorem \ref%
{TheoSW}. In that case $c=1$, $M>0$ is arbitrary, $\alpha =2$. It seems
there is no difference as concerns the finite and infinite dimensional
context.
\end{remark}

\begin{remark}
Since the deformation lemma is also true with Cerami condition, we can
assume that $\varphi $ satisfies the Cerami condition instead of the
Palais-Smale condition. However, in the possible applications, in which the
A-R condition could not be assumed, it seems that checking the Palais-Smale
condition would be an easier task. We refer to \cite{KRV}, \cite{MR} for
some other variational methods.
\end{remark}

\section{Conclusion and other results}

We would like to mention \cite{zampieri} for some other approach connected
with the nonnegative auxiliary scalar coercive function and the main
assumption that for all positive $r$ : $\sup_{\left\Vert x\right\Vert \leq
r}\left\Vert f^{^{\prime }}(x)^{-1}\right\Vert <+\infty $ and $\left\Vert
f(x)\right\Vert \rightarrow +\infty $ as $\left\Vert x\right\Vert
\rightarrow +\infty $. The methods of the proof are quite different as well.
One of the results of \cite{zampieri} most closely connected to ours and to
those of \cite{SIW} reads as follows

\begin{theorem}
Let $X,$ $B$ be a real Banach spaces. Assume that $f:X\rightarrow B$ is a $%
C^{1}$-mapping, $\left\Vert f(x)\right\Vert \rightarrow +\infty $ as $%
\left\Vert x\right\Vert \rightarrow +\infty $, for all $x\in X$ $f^{^{\prime
}}(x)\in Isom\left( X,B\right) $ and for all $x\in X$ $\sup_{\left\Vert
x\right\Vert \leq r}\left\Vert f^{^{\prime }}(x)^{-1}\right\Vert <+\infty $
for all $r>0$. Then $f$ is a diffeomorphism.
\end{theorem}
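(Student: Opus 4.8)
The plan is to bypass the mountain-pass machinery of Theorem~\ref{MainTheo} and argue instead in the spirit of the classical Hadamard--L\'evy / Plastock global inversion theorems, through path lifting. I would first note that, since $f'(x)\in Isom(X,B)$ for every $x$, the Inverse Function Theorem makes $f$ a $C^{1}$ local diffeomorphism; in particular $f$ is an open map, and as soon as $f$ is shown to be a bijection its inverse is automatically of class $C^{1}$ (locally it coincides with the $C^{1}$ local inverse). So it remains to prove that $f$ is onto and one to one, and essentially all the work goes into establishing a \emph{global path lifting property} for $f$.

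To get path lifting I would fix a $C^{1}$ curve $\gamma:[0,1]\to B$ and a point $x_{0}\in f^{-1}(\gamma(0))$. Near $x_{0}$ the local inverse of $f$ yields a $C^{1}$ lift $\sigma$ with $f(\sigma(t))=\gamma(t)$ and $\sigma(0)=x_{0}$, solving $\sigma'(t)=f'(\sigma(t))^{-1}\gamma'(t)$; let $[0,T)$, $T\le 1$, be its maximal interval of existence. On $[0,T)$ one has $f(\sigma(t))=\gamma(t)\in\gamma([0,1])$, a compact and hence bounded set, so the coercivity hypothesis $\|f(x)\|\to\infty$ forces $\|\sigma(t)\|\le r$ for some $r$ independent of $t$. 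The second hypothesis then gives $\|\sigma'(t)\|\le\big(\sup_{\|x\|\le r}\|f'(x)^{-1}\|\big)\max_{[0,1]}\|\gamma'\|=:C<\infty$, so $\sigma$ is $C$-Lipschitz on $[0,T)$; completeness of $X$ lets it extend continuously to $t=T$, and then the local diffeomorphism at $\sigma(T)$ continues the lift beyond $T$, contradicting maximality unless $T=1$. Thus every $C^{1}$ curve --- and, concatenating, every polygonal path --- in $B$ lifts through $f$, and the lift is unique because $f$ is a local homeomorphism.

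From here the passage to a global diffeomorphism is classical. Surjectivity: for $b\in B$ I would pick $x_{0}\in X$ and lift the segment $t\mapsto(1-t)f(x_{0})+tb$, whose lifted endpoint lies in $f^{-1}(b)$. For injectivity I would use that $B$, being convex, is simply connected: any two polygonal paths from $f(x_{0})$ to a given $b$ are joined by a straight-line homotopy built from segments, whose total image is contained in the convex hull of finitely many points, hence compact, so the no-blow-up estimate applies uniformly across the homotopy and the usual monodromy argument shows the lifted endpoint is independent of the path. This produces $h:B\to X$ with $f\circ h=\mathrm{id}_{B}$, and lifting $f\circ\alpha$ for a path $\alpha$ from $x_{0}$ to an arbitrary $x\in X$, together with uniqueness of lifts, gives $h\circ f=\mathrm{id}_{X}$. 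Hence $f$ is a bijection; $h=f^{-1}$ is continuous because $f$ is open and $C^{1}$ because locally it is a $C^{1}$ local inverse, so $f$ is a diffeomorphism.

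The hard part is exactly the no-blow-up step in the construction of the lift, and it is precisely there that the two hypotheses cooperate: coercivity of $f$ confines the lift to a fixed ball (its $f$-image is the compact image of the curve), and local boundedness of $x\mapsto\|f'(x)^{-1}\|$ converts this into a uniform Lipschitz bound on that ball, so the lift cannot escape in finite time. The remaining ingredients --- monodromy over the contractible, hence simply connected, space $B$ --- are routine. I would stress that this route uses neither Lemma~\ref{MPT} nor the Palais--Smale condition; trying instead to derive the theorem from Theorem~\ref{MainTheo} would require equipping $B$ with a $C^{1}$ functional $\eta$ satisfying (c1) and (c4), which a general Banach space need not admit.
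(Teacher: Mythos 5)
Your argument is correct in outline, but note that it cannot be compared with a proof in the paper, because the paper does not prove this statement at all: it is quoted from \cite{zampieri} as a point of comparison, and the authors explicitly remark that Zampieri's method (built on a nonnegative auxiliary scalar coercive function) is quite different from their own variational scheme. What you propose is the classical Hadamard--L\'{e}vy/Plastock continuation argument: local invertibility from $f'(x)\in Isom(X,B)$, lifting of $C^{1}$ curves by solving $\sigma'(t)=f'(\sigma(t))^{-1}\gamma'(t)$, no blow-up of the lift because coercivity of $\Vert f\Vert$ confines it to a ball on which $\sup\Vert f'(x)^{-1}\Vert<\infty$ gives a uniform Lipschitz bound, and then surjectivity plus a monodromy argument over the simply connected target. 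This is genuinely different from the route the paper uses for its own Theorem \ref{MainTheo} (Ekeland's principle for surjectivity and Lemma \ref{MPT} for injectivity, applied to $x\mapsto\eta(f(x)-y)$), and your closing observation is apt: the quoted theorem does not follow formally from Theorem \ref{MainTheo}, since a general Banach space need not carry a $C^{1}$ functional $\eta$ satisfying (c1) and (c4). Conversely, your route consumes the stronger hypothesis $\sup_{\Vert x\Vert\le r}\Vert f'(x)^{-1}\Vert<\infty$ on every ball, exactly the condition the paper's main theorem is designed to avoid (it only needs pointwise invertibility plus Palais--Smale), so each approach buys something the other does not.

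Two points deserve more care than your sketch gives them. First, the ``usual monodromy argument'' is the only nontrivial remaining step for a local homeomorphism that is not yet known to be a covering map: you should show that for the straight-line homotopy $H(s,t)=(1-s)\gamma_{0}(t)+s\gamma_{1}(t)$ with fixed endpoints the lifted endpoint is locally constant in $s$; the standard way is to cover the compact lifted curve $\sigma_{s_{0}}([0,1])$ by finitely many neighborhoods on which $f$ is invertible, use positive distance of compact sets to complements of open sets to see that $H(s,\cdot)$ stays in the images of these neighborhoods for $s$ near $s_{0}$, and rebuild the lift of $H(s,\cdot)$ from the same local inverses; uniqueness of lifts then identifies it with $\sigma_{s}$ and pins its endpoint. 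Second, for $h\circ f=\mathrm{id}_{X}$ you lift $f\circ\alpha$, which is piecewise $C^{1}$ but not polygonal, so the path-independence statement must be proved for piecewise-$C^{1}$ paths rather than only for polygonal ones; this costs nothing, since the straight-line homotopy between $f\circ\alpha$ and the segment joining its endpoints still has bounded image and uniformly bounded $t$-derivatives, but as written there is a mismatch between the class of paths you lift and the class for which you invoke monodromy. With these details filled in, the proof is complete and self-contained.
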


The main difference between our results and the existing one is that we do
not require condition $\sup_{\left\Vert x\right\Vert \leq r}\left\Vert
f^{^{\prime }}(x)^{-1}\right\Vert <+\infty $ for all $r>0$. We have
boundedness of $\left\Vert f^{^{\prime }}(x)^{-1}\right\Vert $ but in a
pointwise manner. Still it seems that checking the condition $\left\Vert
f(x)\right\Vert \rightarrow +\infty $ as $\left\Vert x\right\Vert
\rightarrow +\infty $ might be difficult in a direct manner. Recall that $%
\varphi \left( x\right) =\eta \left( f\left( x\right) -y\right) $ is bounded
from below, $C^{1}$ and satisfies the Palais-Smale condition and therefore
it is coercive as well. However, coercivity alone does not provide the
existence of exactly one minimizer. We would have to add strict convexity to
the assumptions. Thus we can obtain easily the following result

\begin{theorem}
Let $X,$ $B$ be a real Banach spaces. Assume that $f:X\rightarrow B$ is a $%
C^{1}$-mapping, $\eta :B\rightarrow 
\mathbb{R}
_{+}$ is a $C^{1}$ functional and that the following conditions hold\newline
(d1) $\left( \eta \left( x\right) =0\Longleftrightarrow x=0\right) $ and $%
\left( \eta ^{^{\prime }}\left( x\right) =0\Longleftrightarrow x=0\right) $. 
\newline
(d2) for any $y\in B$ the functional $\varphi :X\rightarrow 
\mathbb{R}
$ given by the formula 
\begin{equation*}
\varphi \left( x\right) =\eta \left( f\left( x\right) -y\right)
\end{equation*}%
is coercive and strictly convex;\newline
(d3) for any $x\in X$ the Fr\'{e}chet derivative is surjective, i.e. $%
f^{^{\prime }}(x)X=B$, and there exists a constant $\alpha _{x}>0$ such that
for all $h\in X$%
\begin{equation*}
\left\Vert f^{^{\prime }}(x)h\right\Vert \geq \alpha _{x}\left\Vert
h\right\Vert
\end{equation*}%
\newline
\newline
then $f$ is a diffeomorphism.
\end{theorem}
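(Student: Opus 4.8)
The plan is to reuse the scheme from the proof of Theorem \ref{MainTheo}, replacing the Palais--Smale plus Ekeland input by the coercivity and strict convexity hypotheses (d2). First I would record, exactly as in the Introduction, that (d3) makes $f^{\prime}(x)$ an isomorphism of $X$ onto $B$ for every $x$: the lower bound $\left\Vert f^{\prime}(x)h\right\Vert \ge \alpha_x\left\Vert h\right\Vert$ makes $f^{\prime}(x)$ injective with closed range, surjectivity then gives $f^{\prime}(x)\in \mathrm{Isom}(X,B)$, and $\left\Vert f^{\prime}(x)^{-1}\right\Vert \le \alpha_x^{-1}$. By the Inverse Function Theorem $f$ is a local diffeomorphism, so it suffices to prove that $f$ is a bijection; the local $C^{1}$ inverses then patch together to a global $C^{1}$ inverse.

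For surjectivity, fix $y\in B$ and set $\varphi(x)=\eta(f(x)-y)$. This is $C^{1}$ as a composition of $C^{1}$ maps, is nonnegative by (d1), and by (d2) is coercive and strictly convex. I would then invoke that a coercive, strictly convex $C^{1}$ functional on $X$ attains its infimum at a (necessarily unique) point $\overline{x}$; Fermat's rule together with the chain rule give $\eta^{\prime}(f(\overline{x})-y)\circ f^{\prime}(\overline{x})=0$, and since $f^{\prime}(\overline{x})$ is invertible this forces $\eta^{\prime}(f(\overline{x})-y)=0$, whence $f(\overline{x})=y$ by (d1). Thus $f$ is onto.

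For injectivity, suppose $f(x_{1})=f(x_{2})=y$ with $x_{1}\neq x_{2}$. Then $\varphi(x)=\eta(f(x)-y)$ satisfies $\varphi\ge 0$ and $\varphi(x_{1})=\varphi(x_{2})=\eta(0)=0$, so both points are global minimizers; strict convexity gives $\varphi\bigl(\tfrac12 x_{1}+\tfrac12 x_{2}\bigr)<\tfrac12\varphi(x_{1})+\tfrac12\varphi(x_{2})=0$, contradicting $\varphi\ge 0$. Hence $f$ is one to one, and together with surjectivity and the local diffeomorphism property this shows that $f$ is a diffeomorphism.

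The step I expect to be the main obstacle is the existence of the minimizer $\overline{x}$: on a general, possibly non-reflexive, Banach space $X$, coercivity of a convex $C^{1}$ functional does not by itself guarantee that the infimum is attained, so this point needs either an (implicit) reflexivity assumption on $X$ — where coercivity together with the weak lower semicontinuity coming from convexity suffices — or an extra compactness-type ingredient along the lines of the Ekeland argument used for Theorem \ref{MainTheo}. The remaining ingredients, namely the isomorphism property of $f^{\prime}(x)$, the chain-rule computation, and the convexity argument for injectivity, are routine.
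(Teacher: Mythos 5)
Your argument is essentially the paper's own proof: the paper fixes $y\in B$, asserts from (d2) that $\varphi$ has exactly one minimizer $\overline{x}$, applies Fermat's rule and the chain rule to get $\eta^{\prime}(f(\overline{x})-y)\circ f^{\prime}(\overline{x})=0$, uses invertibility of $f^{\prime}(\overline{x})$ from (d3) and then (d1) to conclude $f(\overline{x})=y$, and settles injectivity by the uniqueness of the minimizer --- which is exactly your strict-convexity argument, with the local-diffeomorphism step supplied by (d3) as in Theorem \ref{MainTheo}. The attainment issue you flag is genuine when $X$ is not reflexive (coercivity plus convexity alone need not force the infimum to be achieved), but the paper does not address it either --- it simply takes existence of the minimizer as part of what (d2) ``obviously'' yields --- so your proposal follows the same route and is, if anything, more explicit about this point.
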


\begin{proof}
Let us fix $y\in B$. Note that by (\textit{d2)} $\varphi $ has exactly one
minimizer $\overline{x}$. Thus by Fermat's Principle we see that 
\begin{equation*}
\varphi ^{^{\prime }}(\overline{x})=\eta ^{^{\prime }}(f\left( \overline{x}%
\right) -y)\circ f^{^{\prime }}(\overline{x})=0\text{.}
\end{equation*}%
Since by (\textit{d3}) mapping $f^{^{\prime }}(\overline{x})$ is invertible
we see that $\eta ^{^{\prime }}(f\left( \overline{x}\right) -y)=0$. Now by (%
\textit{d1}) it follows that%
\begin{equation*}
f\left( \overline{x}\right) -y=0\text{.}
\end{equation*}%
Thus $f$ is surjective and obviously one to one since $\overline{x}$ is
unique.
\end{proof}

We believe that checking that $\varphi $ is strictly convex is still more
demanding than proving that $\varphi $ satisfies the Palais-Smale condition.

\begin{tabular}{l}
Marek Galewski \\ 
Institute of Mathematics, \\ 
Technical University of Lodz, \\ 
Wolczanska 215, 90-924 Lodz, Poland, \\ 
marek.galewski@p.lodz.pl%
\end{tabular}

\begin{tabular}{l}
El\.{z}bieta Galewska \\ 
Centre of Mathematics and Physics, \\ 
Technical University of Lodz, \\ 
Al. Politechniki 11, 90-924 Lodz, Poland, \\ 
elzbieta.galewska@p.lodz.pl%
\end{tabular}

\begin{tabular}{l}
Ewa Schmeidel \\ 
Faculty of Mathematics and Computer Science, \\ 
University of Bialystok, \\ 
Akademicka 2, 15-267 Bia\l ystok, Poland, \\ 
eschmeidel@math.uwb.edu.pl%
\end{tabular}

\end{document}